\documentclass[a4paper, 11pt, oneside]{scrartcl}
  \usepackage[english]{babel}
  \usepackage[utf8]{inputenc}
  \usepackage{paralist}
  \usepackage{mathtools}
  \usepackage{amssymb}
  \usepackage{amsthm}
  \usepackage{bbold}
  \usepackage{microtype}
  \usepackage{enumerate}
  \usepackage{mathrsfs}
  \usepackage{xcolor}
  \usepackage{graphicx}
  \usepackage{paralist}
  \usepackage[normalem]{ulem}
  \usepackage{tikz}
  \usetikzlibrary{matrix,arrows,positioning,shapes}
  \usepackage[square,numbers]{natbib}
  \usepackage{hyperref}
  \usepackage[capitalise]{cleveref}
  \usepackage{authblk}

  \theoremstyle{plain}
  \newtheorem{theorem}{Theorem}
  \newtheorem{lemma}[theorem]{Lemma}
  \newtheorem{prop}[theorem]{Proposition}
  \newtheorem{corollary}[theorem]{Corollary} 
    
  \theoremstyle{definition}
  \newtheorem{defn}[theorem]{Definition}
  
  \theoremstyle{remark}

  \usepackage[left=3cm,right=3cm,top=3.5cm,bottom=4.2cm]{geometry}
  

  \DeclarePairedDelimiter{\FP}{\{}{\,|\kern-0.2em \}}
  \DeclarePairedDelimiter{\FK}{[}{\,|\kern-0.2em ]}

  \newcommand{\K}{\mathbb{K}}
  \newcommand{\F}{\mathbb{F}}

  \newcommand{\D}{\text{D}}

  \renewcommand{\phi}{\varphi}

  \newcommand{\spa}{\operatorname{span}}

  \newcommand{\factor}[2]{\left.\raisebox{.2em}{$#1$}\middle/\raisebox{-.2em}{$#2$}\right.}

  \newcommand{\Hom}{\operatorname{Hom}}

  \let\temp\phi
  \let\phi\varphi
  \let\varphi\temp

  \newcommand{\operad}[1]{\mathbf{#1}} 
 \newcommand{\lLeib}{\mathbf{Leib}}
 \newcommand{\rLeib}{\mathbf{rLeib}}
 \newcommand{\sLeib}{\mathbf{sLeib}}
 \newcommand{\sPerm}{\mathbf{sPerm}}
 \newcommand{\Lie}{\mathbf{Lie}}
 \newcommand{\Ass}{\mathbf{Ass}} 
 \newcommand{\Com}{\mathbf{Com}}
 \newcommand{\Alg}[1]{\operatorname{\mathbf{#1}-Algs}}
 \newcommand{\OP}{\mathcal{P}}
 \newcommand{\OO}{\mathcal{O}}

 \newcommand{\lievert}[1]{%
	\begin{tikzpicture}[scale=0.3,lie/.style={draw,shape=rectangle,minimum size=5,inner sep=0}, com/.style={draw,shape=diamond,minimum size=5,inner sep=0}]
	 \node [lie] (a0) at ( #1/2 + 0.5,1) {};
	 \draw (a0) -- +(0,0.7) ;
	 \foreach  \i in{1,...,#1}
	 {
	\draw (\i,0) -- (a0);
	}
	\end{tikzpicture}%
 }

 \newcommand{\comvert}[1]{%
	\begin{tikzpicture}[scale=0.3,lie/.style={draw,shape=rectangle,minimum size=5,inner sep=0}, com/.style={draw,shape=diamond,minimum size=5,inner sep=0}]
	 \node [com] (a0) at ( #1/2 + 0.5,1) {};
	 \draw (a0) -- +(0,0.7) ;
	 \foreach  \i in{1,...,#1}
	 {
	\draw (\i,0) -- (a0);
	}
	\end{tikzpicture}%
 }

 \newcommand{\verte}[2]{%
	\begin{tikzpicture}[scale=0.3,dot/.style={draw,shape=circle,fill=blue,minimum size=2,inner sep=0},lie/.style={draw,shape=rectangle,minimum size=5,inner sep=0}, com/.style={draw,shape=diamond,minimum size=5,inner sep=0},tex/.style={draw, shape=circle,inner sep=0}]
	 \node [tex] (a0) at ( #1/2 + 0.5,1) {\tiny #2};
	 \draw (a0) -- +(0,0.7) ;
	 \foreach  \i in{1,...,#1}
	 {
	\draw (\i,0) -- (a0);
	}
	\end{tikzpicture}%
 }

 \newcommand{\tree}[2]{%
 \begin{tikzpicture}[baseline=5,scale=0.3,dot/.style={draw,shape=circle,fill=blue,minimum size=2,inner sep=0},lie/.style={draw,shape=rectangle,minimum size=4,inner sep=0}, com/.style={draw,shape=diamond,minimum size=5,inner sep=0}]
	\node [#1] (a0) at (2.25,2) {};
	\node [#2] (a1) at (1.5,1) {};
	\draw (a0) -- +(0,0.7) ;
	\draw (1,0) -- (a1);
	\draw (2,0) -- (a1);
	\draw  (a1) -- (a0); 
	\draw (3,0) -- (a0); 
	\end{tikzpicture}%
    }
    \newcommand{\treete}[2]{%
    \begin{tikzpicture}[baseline=5,scale=0.3,dot/.style={draw,shape=circle,fill=blue,minimum size=2,inner sep=0},lie/.style={draw,shape=rectangle,minimum size=4,inner sep=0}, com/.style={draw,shape=diamond,minimum size=5,inner sep=0},tex/.style={draw, shape=circle,inner sep=0}]
       \node [tex] (a0) at (2.25,2) {\tiny #1};
       \node [tex] (a1) at (1.5,1) {\tiny #2};
       \draw (a0) -- +(0,0.7) ;
       \draw (1,0) -- (a1);
       \draw (2,0) -- (a1);
       \draw  (a1) -- (a0); 
       \draw (3,0) -- (a0); 
       \end{tikzpicture}%
       }

    \newcommand{\treerte}[2]{%
    \begin{tikzpicture}[baseline=5,scale=0.3,dot/.style={draw,shape=circle,fill=blue,minimum size=2,inner sep=0},lie/.style={draw,shape=rectangle,minimum size=4,inner sep=0}, com/.style={draw,shape=diamond,minimum size=5,inner sep=0},tex/.style={draw, shape=circle,inner sep=0}]
       \node [tex] (a0) at (1.75,2) {\tiny #1};
       \draw (a0) -- +(0,0.7) ; 	
       \draw (1,0) -- (a0); 
       \node [tex] (a2) at (2.5,1) {\tiny #2};
       \draw (2,0) -- (a2);
       \draw (3,0) -- (a2);
       \draw  (a2) -- (a0); 
       \end{tikzpicture}%
       }

 \allowdisplaybreaks

  \title{A diagram connecting Symmetric Leibniz algebras with Lie-admissible algebras}
 \author{Benedikt Hurle}
 \affil{\small Chern Institute of Mathematics, Nankai University, Tianjin (P.R. China) }
 \date{May 24, 2019}

\begin{document}

\maketitle

\begin{abstract}
    In this paper we study symmetric Leibniz and related algebras, namely symmetric dialgebras and symmetric Perm-algebras. We also calculate their Koszul duals, if not known. This will give us Lie-admissible algebras and new types of algebras, which we call commutative and associative admissible algebras. Finally we  prove that all operads describing the  considered algebras are Koszul.
\end{abstract}


\section*{Introduction}

Leibniz algebras are a non-symmetric generalization of Lie algebras. They have been studied in recent years notable by Loday \citep{MR1252069}, who also introduced there Koszul dual called Zinbiel algebras.
It is also known that the corresponding operads are Koszul.
Recently a new type of algebras called symmetric Leibniz algebras have been considered \citep{benayadi_bileib} to study Leibniz bialgebras. A symmetric Leibniz algebra is an algebra such that the multiplication satisfies the left and right Leibniz identity, which implies that the commutator is a Lie algebra.
In this paper we want to show that the corresponding operad describing symmetric Leibniz algebras is Koszul and calculate its Koszul dual.
We also will find a sequence of operads similar to~\cite{chapoton} which involves symmetric Leibniz algebras instead of Leibniz algebras, and similarly for the other involved types of algebras.
The sequence considered in loc.cit. is given by $\operad{Leib} \to \operad{DiAss} \to \operad{Perm}$, where the operads describe Leibniz, diassociative and Perm-algebras resp.
The sequence can be obtained by taking the Manin white product of the sequence $\Lie\to \Ass \to \Com$ with $\operad{Perm}$.
The sequence we will construct involves symmetric variants of these algebras and can also be obtained using the Manin white product.
We will also consider the Koszul duals of these symmetric algebras which we call admissible algebras, since the only previously considered operad that appears is $\operad{LieAdm}$, which describes Lie admissible algebras.
Finally we will prove that the operads we describe are Koszul. For Lie-admissible algebras this has already been done in \citep{remm2}.

The paper is structured as following: Frist we recall the definitions of the  types of algebras  and their relations which can be found in \citep{MR1860994,chapoton}.

In \cref{sc:symalg} we recall the definition of symmetric Leibniz and dialgebras from \citep{bordemannleib}, as well as symmetric Perm-algebras, which were called 3-abelian algebras in \citep{remm}.
We also show how they can be obtained using the Manin white or Hadamard product.

In \cref{sc:admalg} we calculate the Koszul dual of the operads defined in the previous section which we call admissible algebras.
We also obtain a commuting diagram of operads.

In \cref{sc:adjoint} we comment on some of the adjoint functors to the functors induced on the categories of algebras by the operad morphisms described in the previous sections.
Finally in \cref{sc:koszul} we proof that the symmetric types of algebras we considered are Koszul and briefly remark on their cohomology theory.

\section*{Acknowledgements}

The author wants to thank S. Benayadi and M. Bordemann for helpful discussions and E. Remm for pointing out  \cite{remm2}.
This work was partially supported by the  LPMC of Nankai University.

\section{Preliminaries}\label{sc:prel}

We want to recall the definition of different types of algebras related to Leibniz algebras which we will need in the following and maybe are not widely known. We assume that the reader is slightly familiar with language of operads, see e.g. \citep{operads,dotsenko1}. 
Given an operad $\OP$ we denote by $\OP^!$ its Koszul dual operad, and by $\OP(n)$ the $S_n$-module describing the operations of arity $n$. Further $\Alg{\OP}$ denotes the category of $\OP$-algebras. 
We denote the operads describing associative, commutative and Lie algebras by $\Ass$, $\Com$ and $\Lie$ resp. 
We will consider all algebras and operads to be over some fixed field $\K$ of characteristic zero.

There is a sequence of operads $\Lie \to \Ass \to \Com$ where the first map is induced by taking the commutator, such that the resulting map $\Alg{Com} \to \Alg{Lie}$ is zero, and an associative algebra is commutative if and only if the corresponding Lie algebra is trivial, i.e. the bracket is zero.

In \citep{chapoton,MR1860994} a similar sequence for Leibniz algebras is constructed. We recall the definition of the involved operads. Note that we consider left Leibniz algebras instead of right Leibniz algebras.

\begin{defn}
    A (left) Leibniz algebra is a vector space $L$ with a linear map $\mu: L \otimes L \to L, a \otimes b \mapsto a \cdot b$, such that for all $a,b,c \in L$
    \begin{align}
        a \cdot (b \cdot c) = (a \cdot b) \cdot c + b \cdot (a \cdot c).
    \end{align}
\end{defn}
This means the left multiplication with a fixed element is a derivation for every element.
Similarly one can define right Leibniz algebras.
We denote the operad describing left (resp. right) Leibniz algebras by $\lLeib$ (resp. $\rLeib$).
The free Leibniz algebra is as  vector space isomorphic to the tensor algebra but the product is different. So the operad as $S$-module is isomorphic to $\Ass$.

\begin{defn}
    A dialgebra (or diassociative algebra) is a vector space $A$ with two multiplications   $\vdash ,\dashv: A \otimes A \to A $ which are both associative and satisfy for all $a,b,c \in A$
    \begin{align}
        (a \vdash b ) \dashv c & =   a \vdash( b \dashv c), \\
        a \dashv (b \vdash c)  & = a \dashv b \dashv c,     \\
        (a \dashv b ) \vdash c & = a \vdash b \vdash c.
    \end{align}
\end{defn}
We will denote the corresponding operad by $\operad{DiAss}$.

There is map $\lLeib \to \mathbf{DiAss}$ given by the fact, that for a diassociative algebra $a \cdot b = a \vdash b - b \dashv a$ is a Leibniz algebra. Also every  associative algebra can be regarded as a dialgebra where the two multiplications are the same.

Now one can, similar to the case of Lie and associative algebras, consider the diassociative algebras such that the corresponding Leibniz algebra is trivial. This leads to
\begin{defn}
    A Perm-algebra (or commutative dialgebra) is  a vector space $A$ with an associative multiplication $\cdot: A \otimes A \to A$, such that
    \begin{equation}
        (a \cdot b) \cdot c = (b \cdot a )\cdot c.
    \end{equation}
\end{defn}

Perm-algebras were introduced by Chapoton \citep{chapoton}.

Since a Perm-algebra is an associative algebra it can also be considered as a dialgebra. Also clearly every commutative algebra is a Perm-algebra.

Also including the Koszul dual this gives the following commuting diagram, see \citep{chapoton}:

\begin{equation} \label{eq:leibdiag}
    \begin{tikzpicture}
        \node(a) {$\Lie$};
        \node(b)[right=of a] {$\Ass$};
        \node(c)[right=of b] {$\Com$};
        \node(d)[above=of a] {$\lLeib$};
        \node(e)[above=of b] {$\mathbf{DiAss}$};
        \node(f)[above=of c] {$\mathbf{Perm}$};
        \node(g)[below=of a] {$\mathbf{PreLie}$};
        \node(h)[below=of b] {$\mathbf{Dend}$};
        \node(i)[below=of c] {$\mathbf{Zinb}$};
        \draw[->] (a) edge (b) (b) edge (c) (d) edge (e) (e) edge (f)
        (d) edge (a) (e) edge (b) (f) edge (c)
        (g) edge (h) (h) edge (i) (a) edge (g) (b) edge (h) (c) edge (i);
    \end{tikzpicture}
\end{equation}

We want to recall the definition of the other appearing algebras next.

\begin{defn}\label{de:prelie}
    A pre-Lie algebra is a vector space $A$ with a multiplication $ \circ : A \otimes A \to A$ such that
    \begin{equation}
        (x \circ y ) \circ z -   x \circ( y  \circ z) =   (y \circ x ) \circ z - y \circ( x  \circ z).
    \end{equation}
\end{defn}
Note that the commutator defines a Lie bracket. Also every associative algebra is a pre-Lie algebra.
We denote the describing operad by $\operad{PreLie}$. It is the Koszul dual operad to $\operad{Perm}$.

The free pre-Lie algebra can be found in \citep{MR1827084}. There it also proven that the operad $\operad{PreLie}$ is Koszul.
In \citep{MR2763748} it is shown that the symmetric part of the multiplication is magmatic, i.e. it satisfies no relations. However there are relations between the symmetric part and the induced Lie bracket.

Similar, as Lie algebras can be obtained from associative algebras, one can obtain pre-Lie algebras from dendriform algebras, which are defined as follows:

\begin{defn}
    A dendriform algebra is a vector space $A$ with two multiplications $ \prec, \succ : A\otimes A \to A$ such that
    \begin{align}
        ( x \prec y) \prec z & =  x \prec (y \prec z) +  x \prec (y \succ z),  \\
        ( x \succ y) \prec z & =  x \succ (y \prec z),                         \\
        x \succ (y \succ z)  & =  (x \succ y) \succ z +  (x \prec y) \succ z.
    \end{align}
\end{defn}
The operad $\operad{Dend}$ describing dendriform algebras is the Koszul dual of $\operad{DiAss}$.

Given a dendriform algebra the map $a \circ b = a \succ b - b \prec a$ is a pre-Lie structure.
Also one obtains an associative algebra by setting $a b =   a \succ b +   a \prec b$.

Now again one can check when the induced pre-Lie algebra is trivial, and obtains:
\begin{defn}
    A Zinbiel (or commutative dendriform) algebra, is a vector space $Z$, with an multiplication $Z \otimes Z \to Z$ such that
    \begin{equation}
        x \cdot ( y \cdot z ) = (x \cdot y ) \cdot z + (y \cdot x) \cdot z.
    \end{equation}
\end{defn}
So every Zinbiel algebra can be considered as a dendriform algebra with $x \succ y :=  y \prec x := x \cdot y$.
Also $x \cdot y + y \cdot x$ defines a commutative associative multiplication.

Zinbiel algebras were introduced as dual Leibniz algebras \citep{MR1379265,MR1860994}.

There is a functorial way of obtaining the sequence $\lLeib \to \operad{DiAss} \to \operad{Perm}$ by taking the Manin white product of  $\Lie \to \Ass \to \Com$ with $\Com$.
This agrees in this case with taking the Hadamard product. Similarly the lower sequence in Diagram~  \ref{eq:leibdiag} can be obtained by taking the Manin black product with $\operad{PreLie}$.   See \citep{chapoton} for a proof of this.

We recall briefly some basics on Manin black and white products of binary quadratic operads, which we will need later. 
A definition and good introduction on Manin products in general can e.g. be found in \citep{vallettemanin}.

Given two operads $\OP, \OO$ we denote the Manin white product by $\OO \circ \OP$ and the black product by $\OO \bullet \OP$. The two products are related by Koszul duality and we have
\begin{equation}\label{th:manindual}
    (\OO \circ\OP)^! = \OO^! \bullet \OP^!.
\end{equation}

Further we recall when the white product agrees with the Hadamard product, see \citep[Sect.4.1]{vallettemanin}. 
The Hadamard product $\OO \otimes_H \OP$ of two operads $\OO$ and $\OP$ is defined as $(\OO \otimes_H \OP)(n) = \OO(n) \otimes \OP(n)$ where the tensor product is the tensor product of $S_n$-modules. It is again an operad.
Let $V$ be an $S$-module concentrated in arity 2 and $\F(V)$ be the free operad on $V$.
 This can be seen as binary trees with vertices   decorated by operations of $V$. 
For every tree $T$ with $n-1$ vertices  there is a map $L_T :  V^{\otimes (n-1)} \to \F(V)(n)$.
\begin{prop}[{\citep[Prop. 15]{vallettemanin}}]\label{th:hadamanin}
    Let $\OP$ be a binary quadratic operad generated by $V$ such that for every $n \geq 3$ and every binary
    tree $T$ with $n - 1$ vertices, the composite $\pi \circ L : V^{\otimes (n-1)} \to \F(V)(n) \to \OP(n)$ is surjective.
    Then for every binary quadratic operad $\OO$, the white product $\OP \circ \OO$ is equal to the Hadamard product $P \otimes_H Q$.
\end{prop}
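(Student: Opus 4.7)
The plan is to use the standard embedding of the Manin white product into the Hadamard product and show that under the given hypothesis this embedding is automatically surjective. Write $\OP = \F(V)/(R)$ and $\OO = \F(W)/(S)$ with $V, W$ concentrated in arity two. By the construction of the white product via generators and relations, $\OP \circ \OO$ may be realised as the sub-operad of $\OP \otimes_H \OO$ generated in arity two by $V \otimes W$, and one has a canonical inclusion $\OP \circ \OO \hookrightarrow \OP \otimes_H \OO$. Thus it suffices to prove that $\OP \otimes_H \OO$ is generated as an operad by $V \otimes W$.

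The key observation is that operad composition in $\OP \otimes_H \OO$ is componentwise, so for every binary tree $T$ with $n-1$ internal vertices, the tree-composition map on the Hadamard product splits as
\[
  L_T(v_1 \otimes w_1, \ldots, v_{n-1} \otimes w_{n-1}) = L_T^{\OP}(v_1, \ldots, v_{n-1}) \otimes L_T^{\OO}(w_1, \ldots, w_{n-1}),
\]
where $L_T^{\OP}$ and $L_T^{\OO}$ denote the corresponding tree-composition maps into $\OP(n)$ and $\OO(n)$. Since $\OP(n) \otimes \OO(n)$ is spanned by pure tensors, it is enough to show that every $p \otimes q$ lies in the $S_n$-submodule generated by the images of the $L_T$ above as $T$ varies.

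Fix $p \otimes q$. Because $\OO$ is generated as an operad by $W$, we may decompose $q = \sum_j \sigma_j \cdot L_{T_j}^{\OO}(w^j)$ for trees $T_j$, permutations $\sigma_j \in S_n$ and $w^j \in W^{\otimes(n-1)}$. Using the diagonal $S_n$-action on the tensor product this gives
\[
  p \otimes q = \sum_j \sigma_j \cdot \bigl( (\sigma_j^{-1} p) \otimes L_{T_j}^{\OO}(w^j) \bigr).
\]
Now invoke the hypothesis: for each $j$, the map $L_{T_j}^{\OP} : V^{\otimes(n-1)} \to \OP(n)$ is surjective, so we can pick $\xi_j \in V^{\otimes(n-1)}$ with $L_{T_j}^{\OP}(\xi_j) = \sigma_j^{-1} p$. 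Substituting and applying the splitting of $L_{T_j}$ on the Hadamard product yields
\[
  p \otimes q = \sum_j \sigma_j \cdot L_{T_j}(\xi_j \otimes w^j),
\]
which manifestly lies in the image of $\OP \circ \OO \hookrightarrow \OP \otimes_H \OO$.

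The main conceptual point — and the reason the hypothesis is asked only of $\OP$ — is precisely this asymmetric step: one decomposes $q$ freely using that $\OO$ is operadically generated by $W$, but this produces an uncontrolled collection of tree shapes $T_j$, and the hypothesis is exactly what is needed to realise each $\sigma_j^{-1} p$ along the \emph{same} shape $T_j$. The only technical bookkeeping is checking that the componentwise composition on the Hadamard product really does give the claimed splitting of $L_T$ and that the inclusion $\OP \circ \OO \hookrightarrow \OP \otimes_H \OO$ is well defined for binary quadratic operads; both are routine consequences of the definitions.
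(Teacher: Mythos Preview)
The paper does not give its own proof of this proposition; it is quoted from Vallette's paper on Manin products and used as a black box, so there is no in-paper argument to compare against.

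Evaluated on its own, your surjectivity argument is correct and captures the essential idea: decompose $q$ along arbitrary trees using only that $\OO$ is generated by $W$, then invoke the hypothesis to realise each $\sigma_j^{-1}p$ along the \emph{same} tree shape. That is exactly the asymmetric step the hypothesis is designed for, and your explanation of why the condition is needed only on $\OP$ is well put.

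The gap is in the other half. You assert that the canonical morphism $\OP \circ \OO \to \OP \otimes_H \OO$ is an inclusion --- indeed, that the white product ``may be realised as the sub-operad of $\OP \otimes_H \OO$ generated by $V \otimes W$'' --- and dismiss this as routine. The existence of the morphism is routine; its injectivity is not. In arity $3$ injectivity does hold by the very definition of the white-product relations, but for $n \geq 4$ one must check that the kernel of $\F(V \otimes W)(n) \to \OP(n) \otimes \OO(n)$ is exactly the operadic ideal generated by those arity-$3$ relations rather than something strictly larger. This is not automatic for arbitrary binary quadratic operads and itself uses the hypothesis on $\OP$. As written, you have only shown that the \emph{image} of $\OP \circ \OO$ in $\OP \otimes_H \OO$ is everything, which is weaker than the claimed isomorphism.
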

Note that all operads which we consider here are binary and quadratic. The condition of the previous proposition is e.g. satisfied for $\Com$, which is also the neutral element of the Manin white product, and $\operad{Perm}$.

\section{Symmetric algebras}\label{sc:symalg}

In this section we want to define symmetric variants of Leibniz, diassociative and Perm-algebras. Symmetric Leibniz algebras can be found in \citep{benayadi_bileib}, symmetric dialgebras in \citep{bordemannleib}. Symmetric Perm-algebras have been considered under the name 3-abelian algebras in \citep{remm}.

\begin{defn}
    A symmetric Leibniz algebra is a left Leibniz algebra $A$ which is also a right Leibniz algebra. This means the multiplication $\cdot:A\otimes A \to A$ satisfies 
    \begin{align}
        a \cdot (b \cdot c ) = ( a \cdot b ) \cdot c + b \cdot (a \cdot c), \\
        (a \cdot b  ) \cdot c  = a \cdot (b \cdot c ) + ( a \cdot c ) \cdot b.        
    \end{align}
\end{defn}
There is a different description of symmetric Leibniz algebras. For this note that  given a binary algebra one can define the commutator by $[a,b] = a\cdot b - b \cdot a$ and the  anti-commutator by $a \diamond b = a \cdot b + b \cdot a$. So giving the antisymmetric part $[,]$ and symmetric part $\diamond$ is equivalent to giving the multiplication $\cdot$.

Using this there is an equivalent description of sym. Leibniz algebras.
\begin{prop}[{\citep{benayadi_bileib}}]
    A binary algebra $A$ is a sym. Leibniz algebra if and only if $[,]$ is a Lie bracket, $\diamond$ is 2-nilpotent, i.e. $(a \diamond b) \diamond c = 0 =a \diamond( b \diamond c )$, and
    $a \diamond [b,c] = 0 = [ a\diamond b,c]$ for all $a,b,c \in A$.
\end{prop}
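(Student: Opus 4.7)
The plan is to exploit the bijective decomposition $a \cdot b = \tfrac{1}{2}(a \diamond b + [a,b])$ of a binary product into its symmetric and antisymmetric parts. Both implications then follow from symmetrizing the left and right Leibniz identities, jointly and separately, in appropriate variables; by construction $\diamond$ is symmetric and $[,]$ is antisymmetric, so only the Jacobi identity and the three listed relations need verification.

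For the ``only if'' direction, I would first add the left and right Leibniz identities. The terms $a\cdot(b\cdot c)$ and $(a\cdot b)\cdot c$ cancel, producing $b\cdot(a\cdot c) + (a\cdot c)\cdot b = 0$, i.e.\ $b \diamond (a \cdot c) = 0$ for all $a,b,c$. Decomposing the argument $a\cdot c$ as $\tfrac12(a\diamond c + [a,c])$ and using bilinearity immediately yields both $x \diamond (y \diamond z) = 0$ and $x \diamond [y,z] = 0$, and the symmetry of $\diamond$ then upgrades the first of these to the full 2-nilpotency $(y\diamond z)\diamond x = 0$. Next I would establish the ``derivation'' formula $[a\cdot b, c] = a\cdot [b,c] + [a,c]\cdot b$: writing the left side as $(a\cdot b)\cdot c - c \cdot (a\cdot b)$, expanding the first summand by the right Leibniz identity and the second by the left Leibniz identity produces exactly this formula. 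Symmetrizing in $(a,b)$ and using what has already been proven gives $[a\diamond b, c] = a\diamond [b,c] + [a,c]\diamond b = 0$, while antisymmetrizing produces the Jacobi identity for $[,]$ in the form $[[a,b],c] = [a,[b,c]] - [b,[a,c]]$.

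For the ``if'' direction, I would substitute the decomposition $a\cdot b = \tfrac12(a \diamond b + [a,b])$ into each of the two Leibniz identities and expand both sides. Every purely $\diamond$--quadratic term vanishes by 2-nilpotency, and every mixed term of the form $x \diamond [y,z]$ or $[x \diamond y, z]$, together with its reorderings obtained from the symmetry of $\diamond$ and antisymmetry of $[,]$, vanishes by the third hypothesis. The surviving purely bracket terms collect, on each side, into $\tfrac14$ times a Jacobi expression for $[,]$, which is zero by hypothesis.

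The main bookkeeping obstacle is the expansion in the ``if'' direction, where each Leibniz identity unfolds into a sizeable sum of monomials in $\diamond$ and $[,]$; the only real care needed is keeping track of the signs introduced when swapping arguments of $[,]$ in order to invoke the hypothesis $[a\diamond b, c] = 0$ in its various reorderings. Once the terms are matched the reduction to Jacobi is immediate.
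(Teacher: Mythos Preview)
The paper does not supply its own proof of this proposition; it is quoted from \cite{benayadi_bileib} without argument, so there is nothing to compare your approach against.

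Your argument is correct in outline and in essentially all details. One small point deserves to be made explicit: from $b \diamond (a\cdot c)=0$ and the decomposition $a\cdot c=\tfrac12(a\diamond c+[a,c])$, bilinearity alone gives only the \emph{sum} $b\diamond(a\diamond c)+b\diamond[a,c]=0$, not each term separately. To split it you must also invoke the same identity with $a$ and $c$ interchanged, which flips the sign of the bracket part and fixes the $\diamond$ part; adding and subtracting then isolates the two vanishings. This is presumably what you had in mind, but ``immediately yields both'' overstates it slightly. The rest of the forward direction (the derivation formula and its (anti)symmetrizations) and the backward direction are fine as written: once all mixed and pure-$\diamond$ terms are killed, each triple product $x\cdot(y\cdot z)$ reduces to $\tfrac14[x,[y,z]]$, and both Leibniz identities become the Jacobi identity.
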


Using this proposition it is easy to proof that the free algebra on a vector space $V$ is given by $\Lie(V) \oplus S^2 V$ where $\Lie(V)$ denotes the free Lie algebra  and  $S V$ the symmetric algebra on $V$. Similarly for the operad describing symmetric Leibniz algebras we have $\operad{sLeib} = \Lie \oplus \K \diamond$.

The operad $\sLeib$ is the pushout of the the diagram:
\begin{center}
    \begin{tikzpicture}
        \node(a) {$\F(V)$};
        \node(b)[right=of a] {$\operad{lLeib}$};
        \node(c)[below=of a]  {$\operad{rLeib}$};
        \path[->] (a) edge (b)  (a) edge (c) ;
    \end{tikzpicture}
\end{center}

Here $V$ is the $S$-module spanned by one binary operation without symmetry and $\operad{lLeib}$ and $\operad{rLeib}$ denote the operad of left and right Leibniz algebras resp.  This follows from the fact that symmetric Leibniz algebras are obtained by requiring that they are left and right Leibniz algebras.

Now one wants to adopt the definition of dialgebra such that the induced Leibniz algebra is a symmetric Leibniz algebra. This has been considered in \citep{bordemannleib}.

\begin{defn}
    A symmetric dialgebra is a vector space $D$ with two associative multiplications $\vdash,\dashv: \D \otimes D \to  D$ such that for all $a,b,c \in D$
    \begin{equation}
        (a \circ_1 b) \circ_2 c = a \circ_3 ( b \circ_4  c).
    \end{equation}
    For $\circ_i \in \{ \vdash, \dashv  \}$.
\end{defn}

Every sym. dialgebra is clearly also a dialgebra.

This means that any composition of three or more elements $x_1, \dots ,x _n \in D$ gives the same result.
So the free algebra on a vector space $V$ is given by $TV \oplus   (V \otimes V)$.

One can introduce two new multiplications on a sym.\ dialgebra. We set
\begin{align}
    a * b &= a \vdash b + a \dashv b, \\
    a \wedge b &= a \vdash b - a \dashv b.
\end{align}

\begin{lemma}
    Given a sym.\ dialgebra, the multiplication $*$ is associative, $\wedge$ is 2-nilpotent, i.e. $a \wedge  (b \wedge c) = 0 =( a \wedge  b) \wedge c$, and they satisfy $a * (b \wedge c) = (a \wedge b)*c = (a*b) \wedge c = a \wedge (b*c) =0$.
\end{lemma}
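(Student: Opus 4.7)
The plan is to exploit the very strong identity defining a symmetric dialgebra: any composition of three elements is independent of the two choices of product and of the placement of parentheses. Write $[abc]$ for this common value, so that
\begin{equation*}
(a \vdash b) \vdash c = (a \vdash b) \dashv c = (a \dashv b) \vdash c = (a \dashv b) \dashv c = a \vdash (b \vdash c) = a \vdash (b \dashv c) = a \dashv (b \vdash c) = a \dashv (b \dashv c) = [abc].
\end{equation*}
With this notation in hand the entire lemma reduces to bookkeeping of signs.

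First I would verify associativity of $*$ by expanding
\begin{equation*}
(a*b)*c = (a\vdash b)\vdash c + (a\vdash b)\dashv c + (a\dashv b)\vdash c + (a\dashv b)\dashv c = 4[abc],
\end{equation*}
and similarly $a*(b*c) = 4[abc]$, so the two sides agree.

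Next I would check 2-nilpotency of $\wedge$: the expansion
\begin{equation*}
(a\wedge b)\wedge c = (a\vdash b)\vdash c - (a\vdash b)\dashv c - (a\dashv b)\vdash c + (a\dashv b)\dashv c = [abc]-[abc]-[abc]+[abc] = 0,
\end{equation*}
and the same pattern of two plus and two minus signs handles $a\wedge(b\wedge c)=0$. For the four mixed vanishing identities $(a*b)\wedge c$, $a*(b\wedge c)$, $(a\wedge b)*c$ and $a\wedge(b*c)$, each expands into four terms with exactly two $+$ and two $-$ signs in front of $[abc]$, so all four evaluate to $0$.

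There is essentially no obstacle: the definition of a symmetric dialgebra is powerful enough that every relation follows from the fact that any trilinear combination is a signed sum of copies of the single quantity $[abc]$. The only thing worth being careful about is keeping track of the signs when distributing $\wedge = \vdash - \dashv$ across the four cases in each identity, which is why I would introduce the notation $[abc]$ up front and just verify that the signed counts match.
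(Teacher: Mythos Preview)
Your proposal is correct and follows essentially the same approach as the paper: expand each expression into the four (or eight) basic triple products, use that in a symmetric dialgebra they all coincide, and then it is just sign-counting. The paper's proof is even terser---it writes out only the associativity of $*$ as a sample computation---so your introduction of the notation $[abc]$ is a mild notational convenience rather than a different argument.
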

\begin{proof}
    This is a very easy calculation.
    Since for example
    \begin{equation*}
        (a *b)*c = (a \vdash b) \vdash c +  (a \dashv b) \vdash c +  (a \vdash b) \dashv c + (a \dashv b) \dashv c = 4  (a \vdash b) \vdash c =a* (b*c).
    \end{equation*}
\end{proof}
Note that giving $*$ and $\wedge$ is equivalent to giving $\vdash$ and $\dashv$.

So the operad can be described as $\Ass \oplus \K\wedge$, where any nontrivial composition with $\wedge$ is zero.

\begin{prop}
    Given a sym.\ dialgebra the map $ a \cdot b := a \vdash b  - b \dashv a$ defines  a sym.\ Leibniz algebra. 
\end{prop}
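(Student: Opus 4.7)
The plan is to check directly that the bracket $a\cdot b := a\vdash b - b\dashv a$ satisfies both the left and right Leibniz identities on a symmetric dialgebra. The left Leibniz identity comes for free: since every symmetric dialgebra is in particular a dialgebra, the already-stated fact that $a\vdash b - b\dashv a$ endows any dialgebra with a left Leibniz structure applies immediately. So the real content is the right Leibniz identity.

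The key observation I would exploit is the one highlighted after the definition: in a symmetric dialgebra every triple product of $x_1, x_2, x_3$ (in that order) is independent both of the bracketing and of the choice of $\vdash$ versus $\dashv$. Hence there is a well-defined trilinear map $\mu: D^{\otimes 3} \to D$ with $\mu(x_1,x_2,x_3) = x_1 \circ_1 x_2 \circ_2 x_3$ for any placement of brackets and any $\circ_i \in \{\vdash, \dashv\}$. Using this, each of $(a\cdot b)\cdot c$, $a\cdot(b\cdot c)$, $b\cdot(a\cdot c)$ and $(a\cdot c)\cdot b$ expands into a signed sum of four $\mu$-terms indexed by certain permutations of $(a,b,c)$, where the signs come from the two minus signs in $u\cdot v = u\vdash v - v\dashv u$.

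Concretely, one finds for instance $(a\cdot b)\cdot c = \mu(a,b,c) - \mu(b,a,c) - \mu(c,a,b) + \mu(c,b,a)$ and similar four-term expressions for the other three brackets. Cancelling the common $\mu(a,b,c)$ and $\mu(c,b,a)$ appearing in $(a\cdot b)\cdot c$ and $a\cdot(b\cdot c)$ leaves four $\mu$-terms which, after relabelling, are exactly the expansion of $(a\cdot c)\cdot b$; this gives the right Leibniz identity. (The same cancellation pattern, read differently, also recovers the left Leibniz identity from scratch.)

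There is no conceptual obstacle here; the only thing requiring care is the bookkeeping of the eight $\mu$-terms and their signs to make sure each side matches. Once the trilinear map $\mu$ is set up, the verification reduces to a one-line comparison of two four-term sums.
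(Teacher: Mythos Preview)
Your proposal is correct and follows essentially the same approach as the paper: both use that in a symmetric dialgebra any triple product depends only on the order of the arguments, so the Leibniz identities reduce to identities among values of a single trilinear map $\mu$. The only difference is presentational: where you carry out the explicit four-term cancellation, the paper observes that, once all triple products are identified via $\mu$, the expression $a\cdot b = a\vdash b - b\dashv a$ behaves in triple products exactly like the commutator of the associative product, so the right Leibniz identity is just the Jacobi identity for that commutator.
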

\begin{proof}
    It is clear that it is a left Leibniz algebra. It also clear that the right Leibniz identity is satisfied since it involves three elements and so it reduces to the fact the the commutator of an associative algebra is a Lie algebra.   
\end{proof}

Now we want to define  another type of algebras. It should be symmetric dialgebras such that the induced sym.\ Leibniz algebra is zero. So we impose the additional relation $a \dashv b = b \vdash a$. This leads to
\begin{defn}
    A sym.\ Perm-algebra is a vector space $A$ with an associative multiplication $\cdot: A \otimes A \to A$ such that for all $a,b,c \in A$
    \begin{equation}
        a \cdot b \cdot c =  a \cdot c \cdot b = b \cdot a \cdot c.
    \end{equation}
\end{defn}
This has already be considered in \citep{remm}, since the dual operad is given by the operad of Lie-admissible algebras. There sym.\ Perm-algebras were called 3-abelian algebras.

This implies that all products involving three or more elements are commutative.

So the free algebra on a vector space $V$ is given by $SV \oplus \Lambda^2 V$. The multiplication is such that $a (b \wedge c) =0$ and $(a \wedge b )\wedge c = 0 =a \wedge( b \wedge c)$.

The operad $\operad{sPerm}$ describing sym.\ Perm-algebras is given by $\Com \oplus \K []$. The symmetric part $\diamond$ is a commutative associative multiplication.

Every sym. Perm-algebra is also a Perm-algebra. Further every sym. Perm-algebra can be regarded as a sym. dialgebra with $a \vdash b = b \dashv a = a \cdot b$.

We get a sequence of operads $\sLeib \to \operad{sDiAss} \to \sPerm$.
This sequence can also be obtained from the sequence $\Lie \to \Ass \to \Com$ by using the Manin white or Hadamard product.
For this we first note:

\begin{lemma}\label{th:spermanin}
    The operad $\sPerm$ satisfies the condition of \cref{th:hadamanin}.
\end{lemma}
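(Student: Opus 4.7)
The plan is, first, to identify $\sPerm(n)$ as an $S_n$-module for all $n\geq 3$, and, second, to exhibit for every binary tree $T$ with $n-1$ internal vertices a single decoration of its vertices whose image under $\pi\circ L_T$ is nonzero. Since $\sPerm(n)$ will turn out to be one-dimensional, any such nonzero image already fills out $\sPerm(n)$, so surjectivity follows.

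For the first step I would unpack the defining relations $a\cdot b\cdot c=a\cdot c\cdot b=b\cdot a\cdot c$ and chase them around a ternary product. The two transpositions appearing here already generate $S_3$, so these relations force all six orderings of a triple to agree; applied repeatedly to adjacent triples inside any longer word, they force every $n$-fold product in $\sPerm$ with $n\geq 3$ to be fully $S_n$-symmetric. Together with the free-algebra description $\sPerm(V)\cong SV\oplus\Lambda^2 V$ recalled above, this shows that $\sPerm(n)\cong\K$ is the trivial $S_n$-module for every $n\geq 3$, spanned by the fully symmetric $n$-fold product $m_n$.

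For the second step, given a binary tree $T$ with $n-1$ vertices, I would label every vertex by the generating multiplication $\mu\in V$, $\mu(a,b)=a\cdot b$. The element $L_T(\mu,\dots,\mu)\in\F(V)(n)$ is some parenthesization of an $n$-fold $\mu$-product; by associativity of $\cdot$ together with the previous paragraph, its image $\pi(L_T(\mu,\dots,\mu))\in\sPerm(n)$ equals a nonzero scalar multiple of $m_n$. Hence $\pi\circ L_T$ hits the generator of the one-dimensional space $\sPerm(n)$ and is therefore surjective.

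The only real content is the combinatorial claim that the two defining relations of $\sPerm$ generate the full $S_n$-symmetry on $n$-fold products for $n\geq 3$; once that is in place, the choice of decoration in the second step is forced and works uniformly for every tree shape $T$.
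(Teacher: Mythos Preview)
Your proposal is correct and follows essentially the same approach as the paper: the paper's one-line proof simply observes that $\sPerm(n)=\Com(n)$ for $n>2$ and declares the conclusion clear, whereas you spell out both halves of this observation explicitly---that $\sPerm(n)$ is the trivial one-dimensional $S_n$-module for $n\geq 3$, and that decorating every vertex of any binary tree $T$ by $\mu$ produces a nonzero image. Your version is more detailed but not a different argument.
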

\begin{proof}
    Since for $n>2$ the $S_n$-module   $\sPerm(n)$ agrees with $\Com(n)$, this is clear.
\end{proof}

\begin{prop}\label{th:leibmanin}
    We have $\sLeib = \Lie \circ \sPerm$, $\operad{sDiAss} = \Ass \circ \sPerm$ and $\sPerm = \Com \circ \sPerm$.
\end{prop}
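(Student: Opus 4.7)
The plan is to reduce each Manin white product to a Hadamard product and then compare arity by arity. By \cref{th:spermanin}, $\sPerm$ satisfies the surjectivity hypothesis of \cref{th:hadamanin}, so for every binary quadratic operad $\OO$ the equality $\OO \circ \sPerm = \OO \otimes_H \sPerm$ holds as operads (the symmetry of the white product and of the Hadamard product allows $\sPerm$ to sit on either side). The three assertions then become $\sLeib = \Lie \otimes_H \sPerm$, $\operad{sDiAss} = \Ass \otimes_H \sPerm$, and $\sPerm = \Com \otimes_H \sPerm$. The last of these is essentially tautological, since $\Com$ is the neutral element for the Manin white product, or equivalently $\Com(n)$ is the trivial representation for all $n$ and the Hadamard product with $\Com$ preserves any operad.

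The remaining content is an arity-by-arity comparison. From the free-algebra descriptions recalled in this section, $\sPerm(2) \cong \K[S_2]$ and $\sPerm(n)$ is the trivial representation for $n \geq 3$; likewise $\sLeib(n) = \Lie(n)$ for $n \geq 3$ (the operation $\diamond$ contributes only in arity $2$) and $\operad{sDiAss}(n) = \Ass(n)$ for $n \geq 3$ (the operation $\wedge$ contributes only in arity $2$). For $n \geq 3$, tensoring with the trivial representation is the identity on $S_n$-modules, so $(\Lie \otimes_H \sPerm)(n) \cong \Lie(n)$ and $(\Ass \otimes_H \sPerm)(n) \cong \Ass(n)$. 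In arity $2$, a direct decomposition of $S_2$-representations, using $\Lie(2) \cong \mathrm{sgn}$ and $\Ass(2) \cong \K[S_2]$, gives $\Lie(2) \otimes \sPerm(2) \cong \mathrm{sgn} \otimes \K[S_2] \cong \K[S_2]$, matching $\sLeib(2)$, and $\Ass(2) \otimes \sPerm(2) \cong \K[S_2] \otimes \K[S_2] \cong \K[S_2] \oplus \K[S_2]$, matching $\operad{sDiAss}(2)$.

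The main subtlety is promoting these matchings of underlying $S$-modules to isomorphisms of operads. This is already built into \cref{th:hadamanin}: its conclusion is equality of operads, with the natural inclusion of the white product into the Hadamard product becoming an equality once the surjectivity hypothesis is satisfied. Hence the arity-by-arity checks above yield the three claimed operad identifications.
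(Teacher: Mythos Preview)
Your proof is correct and follows exactly the paper's approach: invoke \cref{th:spermanin} and \cref{th:hadamanin} to replace the white product by the Hadamard product, handle the $\Com$ case by neutrality, and then compare arity by arity using $\sPerm(n)=\Com(n)$, $\sLeib(n)=\Lie(n)$, $\operad{sDiAss}(n)=\Ass(n)$ for $n\geq 3$, leaving only arity~$2$ to check. One small remark: your last paragraph slightly overstates what \cref{th:hadamanin} provides---it yields white $=$ Hadamard as operads, but the further identification of the Hadamard product with $\sLeib$ (resp.\ $\operad{sDiAss}$) \emph{as an operad} still relies on the observation that every composition involving the extra arity-$2$ generator vanishes on both sides; the paper leaves this equally implicit.
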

\begin{proof}
    The last statement is clear, since $\Com$ is the neutral element for the Manin white product.
    For the other two we use \cref{th:hadamanin} and \cref{th:spermanin}. The Hadamard product can be computed arity by arity. Since $\sPerm(n)= \Com(n)$,  $\sLeib(n) = \Lie(n) $, $\operad{sDiAss}(n) = \Ass(n)$  for $n > 2$ and $\Com$ is the neutral element for the Hadamard product, we only need to check $\sLeib(2) = (\Lie \otimes_H \sPerm)(2)$, $\operad{sDiAss}(2) = (\Ass  \otimes_H \sPerm)(2)$,
    which is clear.
\end{proof}

Most of the relations between the operads we defined so far can be summarized in the following commutative diagram:
\begin{center}
   \begin{tikzpicture}
    \node(a) {$\Lie$};
    \node(b)[right=of a] {$\Ass$};
    \node(c)[right=of b] {$\Com$};
    \node(d)[above=of a] {$\sLeib$};
    \node(e)[above=of b] {$\mathbf{sDiAss}$};
    \node(f)[above=of c] {$\mathbf{sPerm}$};
    \node(g)[above=of d] {$\mathbf{Leib}$};
    \node(h)[above=of e] {$\mathbf{DiAss}$};
    \node(i)[above=of f] {$\mathbf{Perm}$};
    \draw[->] (a) edge (b) (b) edge (c) (d) edge (e) (e) edge (f)  (g) edge (h) (h) edge (i)
    (d) edge (a) (e) edge (b) (f) edge (c)
    (g) edge (d) (h) edge (e) (i) edge (f);
    \draw[dashed](b)  edge[->] (i);
\end{tikzpicture} 
\end{center}
We have not shown that the diagram is commutative, but this is in all cases easy to see.

We note that in the diagram we consider left Leibniz and Perm algebras in the top row, clearly there is a similar diagram which involves right Leibniz and Perm-algebras.

Further note that the middle (top) row can be obtained by taking the Manin white product of the bottom row with $\operad{sPerm}$ ($\operad{Perm}$ resp.). We  show the related maps as solid arrows.

\section{Admissible algebras}\label{sc:admalg}

We now want to give and calculate if necessary the Koszul dual operads of the operads considered in the last section.

We start with symmetric Leibniz algebras. We use the description in term of a symmetric operation $\diamond$ and an antisymmetric operation $[,]$.
So we have $\sLeib = \factor{\F(V)}{(R)}$, where $\F(V)$ is the free operad on $V = \operatorname{span}(\lievert2 , \comvert2 ) $ and $(R)$ is the ideal generated by $R =\operatorname{span}( \tree{lie}{com},  \tree{com}{lie}, \sum_{cycl.} \tree{lie}{lie} )$. Here $\lievert2$ corresponds to the bracket and $\comvert2$ to the symmetric part $\diamond$.

We now compute the orthogonal complement $R^\bot$ with respect to the dual pairing  given in \citep[Sect. 7.6.3]{operads}. We denote the dual of $\lievert2$ by $\comvert2$ and vice versa.
Since $\dim(\F(V)(2))=12$ and dim $R= 10$ we have $\dim R^\bot=2$ and it is spanned by $\tree{com}{com} - (123) \tree{com}{com}, \tree{com}{com} - (321) \tree{com}{com}$.
These two relations are clearly orthogonal to all relations involving $\comvert2$ and are known to be orthogonal to the relation  describing Lie algebras since the Koszul dual of $\operad{Lie}$ is $\operad{Com}$. So $\diamond$ is a commutative associative multiplication.

Of course one can now again describe to resulting operad using one non-symmetric operation and we define:

\begin{defn}
    A com.-admissible algebra $C$ is a vector space with a multiplication $C \otimes C \to C$ written as  $a \otimes b \mapsto a \cdot b $ such that the symmetric part $a \diamond b = a \cdot b + b \cdot a$ is a commutative associative multiplication.  
\end{defn}

The operad $\operad{ComAdm}$ describing com.-admissible algebras is by our construction the Koszul dual of $\sLeib$. It seems difficult to find a good description other then by its defining generators and relations.

So obviously every com.-admissible gives rise to a commutative algebra. Also every Zinbiel algebra is a com.-admissible algebra. This gives a sequence of operads $\operad{Com} \to \operad{ComAdm} \to \operad{Zinb}$ which is Koszul dual to the series  $\operad{Leib} \to \operad{sLeib} \to \operad{Lie}$.

We will see later in \cref{th:kos}  that $\operad{sLeib}$ is Koszul. Since one knows the dimension of the space $\sLeib(n)$  for each arity $n$, this allows us to calculate the dimension of $\sLeib^!(n)= \operad{ComAdm}(n)$. For this we use the fact that $f^{\sLeib^!}(-f^{\sLeib}(x)) = -x$, where $f^\OP$ is the generating series of $\OP$, given by $f^\OP =\sum_{i=1}^\infty \dim \OP(n)
    \frac{x^n}{n!}$ for an arbitrary binary quadratic operad $\OP$, see \citep[7.6.12.]{operads}.
We have $f^{\sLeib}(x) = f^{\Lie}(x) + \frac{x^2}{2} = -\log(1-x) + \frac{x^2}{2}$, since  $\operad{sLeib} = \Lie \oplus \K \diamond$ and it is well known that  $f^{\Lie}(x) = -\log(1-x)$. Using computer software one can calculate to first terms of $f^{\operad{ComAdm}}$ and obtains
\begin{align*}
    f^{\operad{ComAdm}}(x) & = 1 x +2 \frac{x^2}{2!} + 10 \frac{x^3}{3!} + 86 \frac{x^4}{4!} + 1036\frac{x^5}{5!} + 16052\frac{x^6}{6!}+ 304060\frac{x^7}{7!} \\
                        & + 6807656\frac{x^8}{8!} + 175881016\frac{x^9}{9!} + 5150163272\frac{x^{10}}{10!} + \dots.
\end{align*}

%
%
%

Another way of obtaining com.-admissible algebras is considering left and right Zinbiel algebras. 
Left Zinbiel algebras can be described, using a generator and relations as $\operad{Zinb} =\factor{\F(V)}{(R_l)}$ where $V$ is the $S_2$-module described by one binary operation, and $R_l \subset \F(V)(3)$ is the space of relations. 
Similarly  $\operad{rZinb} =\factor{\F(V)}{(R_r)}$. Now one can consider the operad $\factor{\F(V)}{(R_l \cap R_r)}$, which turns out to be $\operad{ComAdm}$. This is dual to the fact that sym. Leibniz algebras can be obtained by combining the relations for left and right Leibniz algebras.

Next we want  to consider the Koszul dual of sym.\ dialgebras.

Again one can use both of the two equivalent descriptions to compute the Koszul dual. We will use the one by $*$ and $\wedge$.
We have $\dim \F(V)(2) = 8$ if considered as a non-symmetric operad with $V =\spa( \verte{2}{*},\verte{2}{$\wedge$} )$ and
$\operad{sDiAss}= \factor{\F(V)}{(R)}$ with 
\begin{equation}
   R = \spa( \treete{*}{*} - \treerte{*}{*} , \treete{$\wedge$}{*}, \treerte{$\wedge$}{*} , \treete{$\wedge$}{$\wedge$}, \treerte{$\wedge$}{$\wedge$} , \treete{*}{$\wedge$}, \treerte{*}{$\wedge$}  ). 
\end{equation}

We denote the dual of $*$ again by $*$ and similarly for $\wedge$.
Since $\dim(R)=7$ we have $\dim( R^\bot) =1$ and $R^\bot = \spa(\treete{*}{*} - \treerte{*}{*} )$.
This is clearly orthogonal to all relations and by dimensional reasons it is the complete orthogonal complement.

\begin{defn}
    An associative admissible algebra $A$  is a vector space with two operations $\succ, \prec : A \otimes A \to A$ such that $ a *b = a \succ b + a \prec b$ is an associative multiplication.
    We will denote the corresponding operad by $\operad{AssAdm}$.
\end{defn}

By the previous we have $\operad{sDiAss}^! = \operad{AssAdm} $.

By the very definition every ass.-admissible algebra defines an associative algebra. Also every dendriform algebra is an ass.-admissible algebra by the obvious map. It is also clear that any com.-admissible algebra with multiplication $\cdot$ can be  considered as an  ass.-admissible with $a \succ b = b \prec a = a \cdot a$.

Using the Koszulness of $\operad{AssAdm}$ and $f^{\operad{sDiAss}}(x) = f^{\Ass} (x) + x^2 = \frac{x}{1-x} + x^2$, one can calculate the generating series of $\operad{AssAdm}$ and with this the dimension of $\operad{AssAdm}(n)$ for small $n$. We obtain
\begin{align*}
    f^{\operad{AssAdm}}(x) & = 1 x +4 \frac{x^2}{2!} + 42 \frac{x^3}{3!} + 744 \frac{x^4}{4!} + 18480\frac{x^5}{5!} + 590400\frac{x^6}{6!}+ 23058000\frac{x^7}{7!} \\
                        & + 1064367360\frac{x^8}{8!}  + 56693831040\frac{x^9}{9!} + 3422589811200\frac{x^{10}}{10!} + \dots
\end{align*}

As mentioned earlier the Koszul dual of $\sPerm$ is given by $\operad{LieAdm}$, the operad describing Lie-admissible algebras. Lie-admissible and related  algebras, such that the commutator defines a Lie bracket, were studied in \cite{remm,remm2}. There also the Koszul dual was calculated.

\begin{defn}
    A Lie admissible algebra is a vector space $L$ with a multiplication $L \otimes L \to L: a \otimes b \mapsto a \circ b$ such that the commutator $[a,b] =a\circ b -b \circ a$ is a Lie bracket.
\end{defn}
Again it is clear that a pre-Lie algebra is a Lie-admissible algebra and every Lie-admissible gives rise to a Lie algebra.

The operad $\operad{LieAdm}$ can be described as a quotient of the free operad in one symmetric operation $\diamond$ and one anti-symmetric operation $[,]$ in arity 2 with the relation that $[,]$ is a Lie bracket. 
From this it is clear that there is map from the operad $\operad{ComMag}$ describing magmatic commutative algebras to $\operad{LieAdm}$, which is injective.  Using the remark after \cref{de:prelie} we get a sequence $\operad{ComMag} \to \operad{LieAdm} \to \operad{PreLie}$. 

Is is again possible to compute the beginning of the generating series $f^\operad{LieAdm}$ and, using $f^\operad{sPerm}=\exp(x)-1 + \frac{x^2}{2} $, we obtain
\begin{align*}
    f^{\operad{AssAdm}}(x) & = 1 x +2 \frac{x^2}{2!} + 11 \frac{x^3}{3!} + 101 \frac{x^4}{4!} + 1299\frac{x^5}{5!} + 21484\frac{x^6}{6!}+ 434314\frac{x^7}{7!}+ 10376729\frac{x^8}{8!} \\
                        & + 286071990\frac{x^9}{9!} + 8938291341\frac{x^{10}}{10!} + \dots
\end{align*}

Also given an $\operad{AssAdm}$-algebra one can construct a Lie-admissible algebra structure by setting $a\circ b= a \succ b - b \prec a$. Then one gets for the induced Lie bracket $[a,b] =a \circ b - b \circ a = a * b - b * a$. This is a Lie bracket since $*$ is associative.

Finally an ass.-admissible algebra such that $\circ$ as defined before is zero can be seen as a com.-admissible algebra, since in this case $a * b= a \succ b + a\prec b = a \succ b + b \prec a = b *a$.

As a corollary to \cref{th:leibmanin} and \cref{th:manindual} we get:
\begin{corollary}
    We have $\operad{LieAdm} =\Lie \bullet \operad{LieAdm}$, $\operad{AssAdm} =\Ass \bullet \operad{LieAdm}$ and $\operad{ComAdm} =\Com \bullet \operad{LieAdm} $.
\end{corollary}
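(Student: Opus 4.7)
The corollary is essentially a direct consequence of \cref{th:leibmanin} together with the Koszul duality between the Manin white and black products \eqref{th:manindual}, so my plan is to reduce each of the three identities to one of the three identities in \cref{th:leibmanin} by taking Koszul duals of both sides.

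First I would recall (all of which are established in the paper or are classical) the list of Koszul duals that I need: $\Lie^! = \Com$, $\Com^! = \Lie$, $\Ass^! = \Ass$, and on the symmetric side $\sLeib^! = \operad{ComAdm}$, $\operad{sDiAss}^! = \operad{AssAdm}$, $\sPerm^! = \operad{LieAdm}$. The first three are standard and the last three are precisely the definitions made in \cref{sc:admalg}. All operads involved are binary and quadratic, so the formula $(\OO \circ \OP)^! = \OO^! \bullet \OP^!$ of \eqref{th:manindual} is applicable.

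Now the three claimed identities follow by applying $(-)^!$ to the three identities of \cref{th:leibmanin} and substituting. Concretely, from $\sPerm = \Com \circ \sPerm$ one obtains
\begin{equation*}
\operad{LieAdm} = \sPerm^! = (\Com \circ \sPerm)^! = \Com^! \bullet \sPerm^! = \Lie \bullet \operad{LieAdm};
\end{equation*}
similarly $\operad{sDiAss} = \Ass \circ \sPerm$ yields $\operad{AssAdm} = \Ass \bullet \operad{LieAdm}$ (using $\Ass^! = \Ass$), and $\sLeib = \Lie \circ \sPerm$ yields $\operad{ComAdm} = \Com \bullet \operad{LieAdm}$ (using $\Lie^! = \Com$).

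I do not expect any real obstacle, since the substance of the proof has been done already in \cref{th:leibmanin} and the Koszul duality between black and white products is quoted as \eqref{th:manindual}. The only point worth double-checking is that the identifications of the Koszul duals on both sides are compatible with the structural isomorphism produced by Manin duality and do not merely assert an arity-wise equality of dimensions; this is automatic here because the involution $(-)^{!!} = \id$ on binary quadratic operads turns the explicit presentations by generators and relations used in \cref{sc:admalg} into the Manin-black presentations on the nose.
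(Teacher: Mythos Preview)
Your proposal is correct and is exactly the argument the paper intends: the corollary is stated in the paper without proof, immediately after the sentence ``As a corollary to \cref{th:leibmanin} and \cref{th:manindual} we get'', and your derivation by applying $(-)^!$ to each identity of \cref{th:leibmanin} and substituting the known Koszul duals is precisely what this sentence is pointing at.
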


Including the Koszul duals (and omitting the dashed lines and their duals) the diagram at the end of the last section becomes:

\begin{center}
    \begin{tikzpicture}
        \node(a) {$\Lie$};
        \node(b)[right=2 of a] {$\Ass$};
        \node(c)[right=2 of b] {$\Com$};
        \node(d)[above=of a] {$\sLeib$};
        \node(e)[above=of b] {$\mathbf{sDiAss}$};
        \node(f)[above=of c] {$\mathbf{sPerm}$};
        \node(d2)[above=of d] {$\lLeib$};
        \node(e2)[above=of e] {$\mathbf{DiAss}$};
        \node(f2)[above=of f] {$\mathbf{Perm}$};
        \node(g)[below=of a] {$\mathbf{LieAdm}$};
        \node(h)[below=of b] {$\mathbf{AssAdm}$};
        \node(i)[below=of c] {$\mathbf{ComAdm}$};
        \node(g2)[below=of g] {$\mathbf{PreLie}$};
        \node(h2)[below=of h] {$\mathbf{Dend}$};
        \node(i2)[below=of i] {$\mathbf{Zinb}$};
        \draw[->] (a) edge (b) (b) edge (c) (d) edge (e) (e) edge (f)  (g) edge (h) (h) edge (i)
        (d2) edge (e2) (e2) edge (f2)  (g2) edge (h2) (h2) edge (i2)
        (d2) edge (d) (d) edge (a)  (a) edge (g) (g) edge (g2)
        (e2) edge (e) (e) edge (b)  (b) edge (h) (h) edge (h2)
        (f2) edge (f) (f) edge (c)  (c) edge (i) (i) edge (i2);
    \end{tikzpicture}    
\end{center}

\section{Adjoint functors}\label{sc:adjoint}

We want to mention that every map of operads gives a functor in the opposite direction on the corresponding categories of algebras. So given a map $\OO \to \OP$ there is a functor $\Alg{\OP} \to \Alg{\OO}$. This functor always has an adjoint functor.
We want to  comment on some of this adjoint functors for the maps in the previous diagram.

For $\Alg{\Lie} \to \Alg{\Ass}$ to functor is given by the universal enveloping algebra as is commonly known.  Similar for (sym.) Leibniz one can construct universal enveloping (symmetric) dialgebras, see  \citep{MR1860994} (resp. \citep{bordemannleib}).

In principal one can also construct an enveloping dendriform algebra for a pre-Lie algebra $(P,\circ)$, by considering the free dendriform algebra, which e.g. can be found in \citep{MR1860994}, and factoring out the ideal generated by $x \succ y - y \prec x - x \circ y$ for $x,y \in P \subset \operad{Dend} (P)$.
Similarly one could define an enveloping ass.-admissible algebra for a Lie-admissible algebra. However since we do not know a good description for the free ass.-admissible algebra, we will not consider this further.

Also it is known that for an associative algebra $A$ the abelianization given by $\factor{A}{([A,A])}$ is a commutative algebra. This is the adjoint functor to the forgetful functor $\Alg{Com} \to \Alg{Ass}$.  Similarly given a dialgebra $D$, one can consider the quotient $\factor{D}{(D \circ D )}$ where $\circ$ is the induced Leibniz algebra. This quotient is by construction a Perm-algebra. Similar constructions can be done for the other horizontal arrows on the right.

It also well known that to any Leibniz algebra $(L,\cdot)$ one can associate a Lie algebra, by quotienting the ideal generated by $x \cdot x$ for $ x \in L$.
Given a sym. Leibniz algebra one can  regard it as a left (or right) Leibniz algebra and then consider the associated Lie algebra. Note that this in general is not the  Lie algebra given by the commutator since the underlying vector space can be different.
It is also possible to associate a symmetric Leibniz algebra to a Leibniz algebra $L$. It is given by the quotient of $L$ with respect to the ideal generated by $(x \cdot y) \cdot z - x \cdot ( y \cdot z ) - (x\cdot z) \cdot y$ for $x,y,z \in L$.

\section{Koszulness and cohomology}\label{sc:koszul}

In this section we want to prove that the sym.\ and admissible types of  algebras we considered are Koszul.
For Lie-admissible algebras this is already known and has been proven in \citep{remm2}. For sym.\ Leibniz and sym.\ dialgebra to our knowledge this is not know and also there  Koszul duals were not  known.

\begin{theorem}\label{th:kos}
    The operads $\sLeib, \operad{sDiAss}$ and $\operad{sPerm}$ are Koszul.
\end{theorem}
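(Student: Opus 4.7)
The plan is to deduce Koszulness of the three operads from the results already assembled in the paper, treating $\sPerm$ by Koszul duality and the other two via their Manin-product decompositions.

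First I would dispatch $\sPerm$. In \cref{sc:admalg} its Koszul dual has been identified as $\operad{LieAdm}$, and Remm \cite{remm2} shows that $\operad{LieAdm}$ is Koszul. Because a binary quadratic operad is Koszul if and only if its Koszul dual is, this immediately yields Koszulness of $\sPerm$. For $\sLeib$ and $\operad{sDiAss}$ I would invoke \cref{th:leibmanin}, which gives $\sLeib = \Lie \circ \sPerm$ and $\operad{sDiAss} = \Ass \circ \sPerm$, and apply the theorem of \cite{vallettemanin} that the Manin white product of two binary quadratic Koszul operads is again Koszul. Combined with the classical Koszulness of $\Lie$ and $\Ass$ and with the first step, this gives Koszulness of both operads simultaneously.

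As a backup, in case the preservation theorem from \cite{vallettemanin} does not apply cleanly in our precise setting, I would fall back on a direct Gröbner/PBW argument in the shuffle operad in the style of Dotsenko--Khoroshkin. Each of our operads has a two-generator quadratic presentation in which the "extra" generator ($\diamond$ in $\sLeib$, $\wedge$ in $\operad{sDiAss}$, $[,]$ in $\sPerm$) is $2$-nilpotent and annihilates every mixed arity-$3$ composition. Consequently, after fixing a path-lexicographic monomial order on shuffle tree monomials, the rewriting system attached to the presentation is unusually degenerate: most critical pairs reduce to zero on both sides automatically, and the genuine ambiguities left over are exactly those already resolved by the classical Gröbner bases of $\Lie$ or $\Ass$. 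Hoffbeck's PBW criterion then concludes.

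The main expected obstacle is not conceptual but bookkeeping: either verifying that the precise hypotheses of the Manin-product preservation result of \cite{vallettemanin} (binary, quadratic, connected, over a field of characteristic zero — all of which hold here) are met so that it can be cited off the shelf, or, along the Gröbner-basis route, choosing a monomial order that simultaneously orients the vanishing relations and the Jacobi/associativity relations compatibly and then patiently enumerating the finite list of critical pairs to confirm confluence. In either case the argument factors through the Koszulness of $\operad{LieAdm}$ and of $\Lie$ and $\Ass$, so the new content is concentrated in the structural decompositions established in \cref{sc:symalg}.
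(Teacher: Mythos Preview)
Your primary approach has a genuine gap: there is no theorem in \cite{vallettemanin} asserting that the Manin white product of two Koszul binary quadratic operads is again Koszul. Koszul duality exchanges white and black products, but neither product is known to preserve Koszulness in general, and no such preservation result appears in Vallette's paper or in the Loday--Vallette book. So even after you have established $\sPerm$ Koszul via $\operad{LieAdm}$ and Remm's result, the decompositions $\sLeib=\Lie\circ\sPerm$ and $\operad{sDiAss}=\Ass\circ\sPerm$ from \cref{th:leibmanin} do not by themselves transfer Koszulness. Your own hedge (``in case the preservation theorem \dots\ does not apply cleanly'') is well placed: it simply does not apply.

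Your backup route, on the other hand, is exactly what the paper does and is correct. The paper uses the rewriting method on the presentation of $\sLeib$ by $[,]$ and $\diamond$: every critical monomial built solely from $[,]$ lies in $\Lie$ and is confluent there, while any critical monomial that contains $\diamond$ rewrites to zero because all arity-$3$ compositions involving $\diamond$ vanish. The same pattern handles $\operad{sDiAss}$ (with $*$ and $\wedge$, reducing to $\Ass$) and $\sPerm$ (with $\diamond$ and $[,]$, reducing to $\Com$). So the ``bookkeeping'' you anticipate is minimal---the degenerate relations make almost every ambiguity trivially confluent---and this Gr\"obner/rewriting argument should be your main proof rather than a fallback.
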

Before we prove this we note that by Koszul duality wee also get:
\begin{corollary}
    The operads $\operad{LieAdm}$, $\operad{AssAdm}$ and $\operad{ComAdm}$ are Koszul.
\end{corollary}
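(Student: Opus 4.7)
The plan is to derive this as an immediate consequence of \cref{th:kos} via the standard fact that a binary quadratic operad $\OP$ is Koszul if and only if its Koszul dual $\OP^!$ is Koszul (see, e.g., \citep[Ch.~7]{operads}). Since Koszulness is preserved under Koszul duality, the three assertions reduce to identifying each of $\operad{LieAdm}$, $\operad{AssAdm}$, $\operad{ComAdm}$ as the Koszul dual of one of the operads shown to be Koszul in \cref{th:kos}.

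The identifications were already made in \cref{sc:admalg}: we established $\operad{ComAdm} = \sLeib^!$ directly from the orthogonal complement computation, $\operad{AssAdm} = \operad{sDiAss}^!$ from the analogous computation with generators $*$ and $\wedge$, and $\operad{LieAdm} = \sPerm^!$ as recalled from \citep{remm,remm2}. Applying Koszul duality to each of the three statements of \cref{th:kos} then gives the three statements of the corollary.

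The one point worth stating explicitly in the write-up is that all six operads involved are binary quadratic, so the duality $(\OP^!)^! = \OP$ and the equivalence ``$\OP$ Koszul $\iff$ $\OP^!$ Koszul'' apply without additional hypothesis. There is no real obstacle: the content has already been packaged into \cref{th:kos} and the dual identifications of \cref{sc:admalg}.
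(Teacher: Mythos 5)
Your proposal is correct and matches the paper's own reasoning: the corollary is obtained from \cref{th:kos} precisely via the standard fact that a binary quadratic operad is Koszul if and only if its Koszul dual is, together with the identifications $\operad{ComAdm}=\sLeib^!$, $\operad{AssAdm}=\operad{sDiAss}^!$ and $\operad{LieAdm}=\sPerm^!$ established in \cref{sc:admalg}. Your explicit remark that all operads involved are binary quadratic is a harmless (and slightly more careful) addition to what the paper states in one line.
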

\begin{proof}
    We will only give the proof for sym. Leibniz algebras, since the other two case are very similar.

    We will use the rewriting method e.g.\ described in \citep[Sect. 8.3]{operads} or \citep{dotsenko_grobner}.
    We will use the description of $\sLeib$ using $[,]$ and $\diamond$. Then any critical monomial  either only involves $[,]$ and belongs actually to $\Lie$ so it is known to be confluent, or it involves $\diamond$ and at least one other operation. In this case the rewriting will always give zero. So it is also confluent.
\end{proof}

So in principal this can be used to define cohomologies for the algebras considered. Since given a Koszul operad $\OO$ the coderivations on the connected cofree coalgebra over the Koszul dual cooperad give a Lie algebra such that the Maurer-Cartan elements are $\OO$-algebras. 

In the case of Lie-admissible algebras this agrees with the cohomology and graded  Lie bracket on the cochain complex  given in \citep{remm}.

Since the free sym. Leibniz and sym.\ di-algebras -- and with this also the connected cofree coalgebra -- are easy to understand we get a cohomology for com.- and ass.-admissible algebras. This cohomologies are closely related the cohomology of the induced commutative and associative algebra resp. In the associative case for example  in the cochain complex one just has  to add  $\Hom(A \otimes A,A)$   to $\Hom(A \otimes A,A)$.

In principal one also obtains cohomologies for sym.\ Leibniz and sym.\ dialgebras but in these cases the underlying complex is quite big and difficult to describe since the free com.- and ass.-admissible algebras are quite  big.

\bibliographystyle{bibstyle}
\bibliography{bibli}

\end{document}